\numberwithin{equation}{section}
\newtheorem{theorem}{Theorem}
\newtheorem{corollary}[theorem]{Corollary}
\newtheorem{definition}[theorem]{Definition}
\newtheorem{example}[theorem]{Example}
\newtheorem{lemma}[theorem]{Lemma}
\newcommand{\n}{\mathbb{N}}
\begin{document}
	
	\vspace*{2cm} \normalsize \centerline{\Large \bf Fixed Point Property in G-Complete Fuzzy Metric Space}
	
	\vspace*{1cm}
	
	\centerline{\bf Ismail TAHIRI$^a$ and Ahmed NUINO$^b$}
	
	\vspace*{0.5cm}
	
	\centerline{$^a$ Higher Normal School, AMCS Research Team, Teouan, Morocoo.}
	
	\centerline{$^b$ ISPITS Tetouan, AMCS Research Team, Teouan, Morocoo.}


	\vspace*{1cm}
	
	\noindent {\bf Abstract.}
	 Our purpose of this paper is to focus on fixed point property in fuzzy metric space. To achieve our objective, we will introduce a new contraction condition to examine the fixed point for multi-valued mapping, then we will be investigating the obtained result to ensure the existence and uniqueness of this property for single-valued mapping. To show the use of our main result, we will give the relative result in the ordinary metric space.

	\vspace*{0.5cm}
	
	\noindent {\bf Key words:}
	Fuzzy metric spaces; G-Cauchy sequence; $\zeta$-contraction multi-valued mapping; $\zeta$-contraction mapping; Fixed point.\\
	\noindent {\bf AMS subject classification 2020:} Primary 47H10; Secondary 47J26.

\section{Introduction}
The theory of fuzzy sets created by L. A. Zadeh\cite{emm17} has had considerable applications in several sciences \cite{emm18,emm19,emm20,emm21,emm22}, motivated Kramosil and Michalek \cite{emm23} to present fuzzy metric spaces. Later, in order to achieve a Hausdorff topology for it, George and Veeramani amended its concept \cite{emm24} which inspired the authors to study the fixed point property for single-valued mapping in this formulation of fuzzy metric space \cite{emm33,emm34,emm35}. Based on the collection of nonempty compact subsets of a particular fuzzy metric space, Rodr\'iguez-L\'opez and Romaguera create a Hausdorff fuzzy metric in the sense of George and Veeramani\cite{emm2}. Several fixed point outcomes have been attained for multi-valued mappings in fuzzy metric spaces \cite{emm26,emm27,emm28,emm30,emm31,emm32}.\\
This is the format of the paper. In Section $2$, we report some initial findings. In section $3$, we introduce a new $\zeta$-contraction multi-valued and single valued mapping, then we examine the fixed point property for this mappings types. In last section we give an application in ordinary metric space to highlight our main result.
\section{Preliminary}
\par In this section we recall briefly some well-known definitions and results in the fuzzy metric space theory.
\begin{definition}
A $\mathfrak{t}$-norm is a binary operation on $[0,1]$ such that
\begin{enumerate}
\item $\Gamma(\mu,\nu)=\Gamma(\nu,\mu)$,
\item $\Gamma(\mu,\Gamma(\nu,w))=\Gamma(\Gamma(\mu,\nu),w)$,
\item $\Gamma(\mu,\nu)<\Gamma(\mu,w)$ whenever $\nu<w$,
\item $\Gamma(\mu,1)=\Gamma(1,\mu)=\mu$.
\end{enumerate}
for all $\mu,\nu,w$ in $[0,1]$.
\end{definition}
\par For a basic examples of t-norms we introduce:
\begin{enumerate}
  \item $\Gamma_{L}(\mu,\nu)=\max\{\mu+\nu-1,0\}$,
  \item $\Gamma_{p}(\mu,\nu)=\mu\nu$,
  \item $\Gamma_{M}(\mu,\nu)=Min(\mu,\nu)$.
\end{enumerate}
For any $\mathfrak{t}$-norm $\Gamma$ we have $\Gamma\leq \Gamma_{M}$.\\
Let $\Gamma$ be a $\mathfrak{t}$-norm and $n\in\mathbb{N}$, for $(\mu_{1},\mu_{2},\ldots,\mu_{n})\in[0,1]^{n}$, the values $\Gamma(\mu_{1},\mu_{2},\ldots,\mu_{n})$ is defined by
\begin{center}
  $\Gamma_{i=1}^{0}\mu_{i}=1$, $\Gamma_{i=1}^{n}\mu_{i}=\Gamma(\Gamma_{i=1}^{n-1}\mu_{i},\mu_{n})=\Gamma(\mu_{1},\mu_{2},\ldots,\mu_{n})$.
\end{center}
Specially, we have
\begin{enumerate}
  \item $\Gamma_{L}(\mu_{1},\mu_{2},\ldots,\mu_{n})=\max(\sum_{i=1}^{n}\mu_{i}-(n-1),0)$,
  \item $\Gamma_{M}(\mu_{1},\mu_{2},\ldots,\mu_{n})=\min(\mu_{1},\mu_{2},\ldots,\mu_{n})$,
  \item $\Gamma_{p}(\mu_{1},\mu_{2},\ldots,\mu_{n})=\prod_{i=1}^{n}\mu_{i}$.
\end{enumerate}
and for $\mu\in[0,1]$ , we have
$$\Gamma^{n}(\mu)=\left\{\begin{array}{lll}
1 & \mbox{if } n=0,\\
\Gamma(\Gamma^{n-1}(\mu),\mu) & \mbox{ otherwise.}
\end{array}\right.$$
\begin{definition}
We said that a $\mathfrak{t}$-norm $\Gamma$ is of $\mathfrak{H}$-type if
 $$\forall\epsilon\in(0,1)\quad \exists\lambda\in(0,1):\rho>1-\lambda\Rightarrow \Gamma^{n}(\rho)>1-\epsilon \mbox{ for all } n\geq 1.$$
\end{definition}
\par Note that $\Gamma_{M}$ is an example of $\mathfrak{H}$-type $\mathfrak{t}$-norm, but $\Gamma_{L}$ is not and for more details see\cite{emm3}.

\begin{definition}
Let $\Xi$ be a non empty set. A fuzzy metric on $\Xi$ is a mapping $\Theta:\Xi\times \Xi\times(0,\infty)\longrightarrow(0,1]$ such that:
\begin{enumerate}
\item $\Theta(u,v,0)>0$,
\item $\Theta(u,v,\rho)=1$ if and only if $u=v$,
\item $\Theta(u,v,\rho)=\Theta(y,x,\rho)$,
\end{enumerate}
for all $u, v$ in $\Xi$ and $\rho>0$.
\end{definition}
\begin{definition}\cite{emm24}
A fuzzy metric space is a triple $(\Xi,\Theta,\Gamma)$ where $\Xi$ is a non empty set, $\Theta$ is a fuzzy metric on $\Xi$, $\Gamma$ is a continuous $\mathfrak{t}$-norm, and the following statements are satisfied for all  $u$, $v$, $z$ in $\Xi$ and $\rho,s>0$
\begin{enumerate}
\item $\Theta(u,z,\rho+s)\geq \Gamma(\Theta(u,v,\rho),\Theta(v,z,s))$,
\item $\Theta(u,v,.):(0,\infty)\rightarrow(0,1]$ is continuous.
\end{enumerate}
\end{definition}
\begin{example}\cite{emm24}\label{M31}
Let $(\Xi,d)$ be a metric space and $\Theta(u,v,\rho)=\displaystyle\frac{\rho}{\rho+d(u,v)}$ for all $u,v\in \Xi$ and $\rho>0$. Then $(\Xi,\Theta,\Gamma_{p})$ is a fuzzy metric space.
\end{example}
\begin{definition}
Let $\{x_{n}\}$ be a sequence in a fuzzy metric space $(\Xi,\Theta,\Gamma)$. We said that $\{x_{n}\}$
\begin{enumerate}
\item  converge to $x\in \Xi$ if for all $\epsilon\in(0,1)$ and $\rho>0$, there exists $N=N(\epsilon,\rho)\in\n$ such that $\Theta(x_{n},x,\rho)>1-\epsilon$ whenever $n\geq N$,
\item is a Cauchy sequence if for all $\epsilon\in(0,1)$ and $\rho>0$, there exists $N=N(\epsilon,\rho)\in\n$ such that $\Theta(x_{n},x_{m},\rho)>1-\epsilon$ whenever $n,m\geq N$,
\item is a G-Cauchy sequence if and only if for every positive integer $p$, for all $\epsilon\in(0,1)$ and $\rho>0$, there exists $N=N(\epsilon,\rho)\in\n$ such that $\Theta(x_{n},x_{n+p},\rho)>1-\epsilon$ whenever $n\geq N$.
\end{enumerate}
The space $(\Xi,\Theta,\Gamma)$ is complete (G-complete resp.) if every Cauchy (G-Cauchy resp.) sequence converges to some $x\in \Xi$.
\end{definition}
\par We note that every Cauchy sequence is a G-Cauchy sequence and there is a G-Cauchy sequence no Cauchy sequence wich is demonstrated by the following example \cite{emm24}.
\begin{example}
Let $\Xi=\mathbb{R}$ and $\Theta(u,v,\rho)=\displaystyle\frac{\rho}{\rho+|u-v|}$ for all $u,v\in \Xi$ and $\rho>0$. Then $(\Xi,\Theta,\Gamma_{p})$ is a fuzzy metric space. So the sequence $\{u_{n}\}$ which $u_{n}=\displaystyle\sum_{k=1}^{n}\frac{1}{k}$ is a G-Cauchy but no Cauchy.
\end{example}
It is believed that the fuzzy metric $\Theta$ is continuous on $\Xi\times \Xi\times(0,\infty)$ if $\lim_{n\rightarrow\infty}\Theta(u_{n},v_{n},\rho_{n})=\Theta(u,v,\rho)$ while $\{(u_{n},v_{n},\rho_{n})\}$ is a sequence in $\Xi\times \Xi\times(0,\infty)$ converges to a point $(u,v,\rho)\in \Xi\times \Xi\times(0,\infty)$.
\begin{lemma}\cite{emm2}
  If $(\Xi,\Theta,\Gamma)$ is a fuzzy metric space. Then the fuzzy metric $\Theta$ is continuous on $\Xi\times \Xi\times(0,\infty)$.
\end{lemma}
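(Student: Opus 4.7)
The plan is to prove the joint continuity by establishing the two one-sided bounds
$\liminf_{n} \Theta(u_n,v_n,\rho_n) \geq \Theta(u,v,\rho)$ and
$\limsup_{n} \Theta(u_n,v_n,\rho_n) \leq \Theta(u,v,\rho)$
for an arbitrary sequence $(u_n,v_n,\rho_n) \to (u,v,\rho)$ in $\Xi\times\Xi\times(0,\infty)$. The tools at hand are the triangle inequality (axiom 1 of the fuzzy metric space), continuity of the $\mathfrak{t}$-norm $\Gamma$, continuity of the partial map $\Theta(x,y,\cdot)$, and the obvious fact that convergence $u_n\to u$ in the fuzzy topology means $\Theta(u_n,u,\delta)\to 1$ for every fixed $\delta>0$.

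For the lower bound, I would fix a small $\delta>0$ with $\rho-2\delta>0$ and, for $n$ large enough to ensure $\rho_n-2\delta>0$, apply the triangle inequality twice to split the triple $(u_n,v_n)$ across the intermediate points $u$ and $v$, obtaining
\[
\Theta(u_n,v_n,\rho_n) \;\geq\; \Gamma\bigl(\Gamma(\Theta(u_n,u,\delta),\Theta(u,v,\rho_n-2\delta)),\Theta(v,v_n,\delta)\bigr).
\]
Passing to the limit in $n$, the outer factors tend to $1$, the middle factor tends to $\Theta(u,v,\rho-2\delta)$ by continuity in the third argument, and continuity of $\Gamma$ (together with $\Gamma(1,a)=\Gamma(a,1)=a$) yields $\liminf_n\Theta(u_n,v_n,\rho_n)\geq\Theta(u,v,\rho-2\delta)$. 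Letting $\delta\to 0^+$ and using continuity of $\Theta(u,v,\cdot)$ gives the desired lower bound.

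For the upper bound the symmetric strategy is to split $(u,v)$ through $u_n$ and $v_n$: for $\delta>0$,
\[
\Theta(u,v,\rho_n+2\delta) \;\geq\; \Gamma\bigl(\Theta(u,u_n,\delta),\Gamma(\Theta(u_n,v_n,\rho_n),\Theta(v_n,v,\delta))\bigr),
\]
and the same limit argument, combined with continuity of $\Theta(u,v,\cdot)$ at $\rho$, yields $\Theta(u,v,\rho)\geq\limsup_n\Theta(u_n,v_n,\rho_n)$ after $\delta\to 0^+$. The two bounds together give $\lim_n\Theta(u_n,v_n,\rho_n)=\Theta(u,v,\rho)$.

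The main technical subtlety — and the only real obstacle — is the coordination of the variable time parameter $\rho_n$ with the fixed shifts $\pm 2\delta$ used in the triangle inequality, which is why I prefer to pin down $\delta$ first, take the limit in $n$ (using that $\rho_n-2\delta\to\rho-2\delta$ and $\rho_n+2\delta\to\rho+2\delta$ together with continuity in the third argument), and only then let $\delta\to 0^+$. Once this order of limits is respected, the rest is a straightforward application of the continuity of $\Gamma$ and the normalisation identity $\Gamma(a,1)=a$.
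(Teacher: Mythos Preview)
The paper does not supply its own proof of this lemma; it is quoted from Rodr\'iguez-L\'opez and Romaguera \cite{emm2}, so there is nothing in the paper to compare against. Your argument is correct and is in fact the standard proof of this result: two applications of the triangle inequality with auxiliary time-shifts $\pm 2\delta$, followed by continuity of $\Gamma$ and of $\Theta(u,v,\cdot)$, and then $\delta\to 0^{+}$. The only point worth making explicit in a full write-up is that for the upper bound one should pass to a subsequence realising the $\limsup$ before invoking continuity of $\Gamma$, since $\Theta(u_n,v_n,\rho_n)$ sits inside the $\mathfrak{t}$-norm rather than standing alone; but this is routine and your outline clearly anticipates it.
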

Think of $(\Xi,\Theta,\Gamma)$ as a fuzzy metric space. All non-empty compact subsets of $\Xi$ are represented by the letter $K(\Xi)$. We remember the idea of Hausdorff fuzzy metric that was produced by a fuzzy metric $\Theta$ in the following manner, in accordance with \cite{emm2}: For $A,B\in K(\Xi)$ and $\rho>0$ define
\begin{equation}\label{M10}
\Upsilon(A,B,\rho)=\min\{\inf_{v\in B}\Theta(A,v,\rho),\inf_{u\in A}\Theta(u,B,\rho)\}
\end{equation}
 where $\Theta(v,A,\rho)=\sup_{u\in A}\Theta(v,u,\rho)$. The triplet $(K(\Xi),\Upsilon,\Gamma)$ recognized as the Hausdorff fuzzy metric space.
\begin{lemma}\cite{emm2}\label{M8}
  Let $(\Xi,\Theta,\Gamma)$ be a fuzzy metric space. Then, for each $u\in \Xi$, $B\in K(\Xi)$ and $\rho>0$, there is $v\in B$ such that
  $\Theta(u,B,\rho)=\Theta(u,v,\rho)$.
\end{lemma}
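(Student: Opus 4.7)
\medskip

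\noindent\textbf{Proof proposal.} The plan is to recognize this as the standard fact that a continuous real-valued function on a compact set attains its supremum, applied to the map $v\mapsto \Theta(u,v,\rho)$ on the compact set $B$. Fix $u\in\Xi$ and $\rho>0$, and define $f\colon B\to(0,1]$ by $f(v)=\Theta(u,v,\rho)$. By definition, $\Theta(u,B,\rho)=\sup_{v\in B}f(v)$, so it suffices to exhibit some $v\in B$ at which this supremum is attained.

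First I would argue that $f$ is continuous on $B$. This is the only place where any content is needed: it follows from the preceding lemma, which asserts the joint continuity of $\Theta$ on $\Xi\times\Xi\times(0,\infty)$. Indeed, if $v_n\to v$ in $B$, then $(u,v_n,\rho)\to(u,v,\rho)$ in $\Xi\times\Xi\times(0,\infty)$, and the cited lemma gives $f(v_n)=\Theta(u,v_n,\rho)\to\Theta(u,v,\rho)=f(v)$.

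Next I would conclude via a sequential compactness argument. Pick a sequence $\{v_n\}\subset B$ with $f(v_n)\to\sup_{v\in B}f(v)=\Theta(u,B,\rho)$. Because $B\in K(\Xi)$ is compact (hence sequentially compact in the metrizable topology induced by $\Theta$), some subsequence $v_{n_k}$ converges to a point $v^\ast\in B$. Applying continuity of $f$ to this subsequence yields $f(v_{n_k})\to f(v^\ast)$, and comparing with the original limit gives $f(v^\ast)=\Theta(u,B,\rho)$, which is the desired conclusion with $v:=v^\ast$.

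I do not anticipate any real obstacle: the only nontrivial ingredient is the continuity of $\Theta$, which has already been recorded in the preceding lemma. The one point that deserves mild care is the passage from compactness of $B$ to sequential compactness, i.e.\ the implicit use of the fact that the topology of a fuzzy metric space in the sense of George--Veeramani is metrizable (or at least first countable), so that compact subsets are sequentially compact; if one prefers to avoid this, the same proof goes through by replacing sequences with nets, or by noting that the continuous image $f(B)\subset(0,1]$ is compact and therefore closed, so $\sup f(B)\in f(B)$.
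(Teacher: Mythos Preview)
Your argument is correct and is precisely the standard proof: continuity of $\Theta$ (from the preceding lemma) makes $v\mapsto\Theta(u,v,\rho)$ continuous on the compact set $B$, so the supremum is attained. Note, however, that the paper does not actually supply a proof of this lemma; it merely quotes the result from \cite{emm2}, so there is nothing to compare against beyond observing that your proof is the expected one from that reference.
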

\begin{definition}
Think of $(\Xi,\Theta,\Gamma)$ as a fuzzy metric space. A multi-valued mapping $S:\Xi\longrightarrow K(\Xi)$ is said possesses a fixed point if there exists $u\in \Xi$ such that $u\in Su$.
\end{definition}

\section{Main results}
\par The letter $\Pi$ stands for every function $\zeta:(0,\infty)\longrightarrow(0,\infty)$ verified  $\lim_{n\rightarrow\infty}\zeta^{n}(\rho)=0$ for all $\rho>0$.
\par Now we introduce the notion of $\zeta$-contraction multi-valued and single-valued mapping.
\begin{definition}
Think of $(\Xi,\Theta,\Gamma)$ as a fuzzy metric space. A multi-valued mapping $S:\Xi\longrightarrow K(\Xi)$ is called $\zeta$-contraction, if there is a $\zeta\in\Pi$ such that for all $u,v\in \Xi$ and $\rho>0$
  $$\Upsilon(Su,Sv,\zeta(\rho))\geq \Theta(u,v,\rho).$$
\end{definition}
If $\zeta(\rho)=kt$ we said that $S$ is contraction multi-valued mapping.
\begin{definition}
 Think of $(\Xi,\Theta,\Gamma)$ as a fuzzy metric space. A mapping $f:\Xi\longrightarrow \Xi$ is called $\zeta$-contraction, if there is a $\zeta\in\Pi$ such that for all $u,v\in \Xi$ and $\rho>0$
  $$\Theta(fu,fv,\zeta(\rho))\geq \Theta(u,v,\rho).$$
\end{definition}
\par Now we present some lemmas which plays an important role in the following.
\begin{lemma}\label{M19}
Think of $(\Xi,\Theta,\Gamma)$ as a fuzzy metric space such that $\lim_{\rho\rightarrow\infty}\Theta(u,v,\rho)=1$ for all $u,v\in \Xi$ and $\rho>0$. If $S:\Xi\longrightarrow K(\Xi)$ is a $\zeta$-contraction multi-valued mapping, then there is a sequence $\{u_{n}\}\subset \Xi$ satisfying
\begin{center}
$u_{n+1}\in Su_{n}$ and $\Theta(u_{n+1},u_{n+2},\zeta(\rho))\geq \Theta(u_{n},u_{n+1},\rho)$ for all $\rho>0$.
\end{center}
\end{lemma}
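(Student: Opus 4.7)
The plan is an inductive construction. Start with any $u_{0}\in\Xi$; since $Su_{0}\in K(\Xi)$ is nonempty, choose any $u_{1}\in Su_{0}$. Suppose $u_{0},u_{1},\ldots,u_{n+1}$ have already been selected with $u_{k+1}\in Su_{k}$ for every $k\leq n$; it remains to produce $u_{n+2}\in Su_{n+1}$ for which
$$\Theta(u_{n+1},u_{n+2},\zeta(\rho))\;\geq\;\Theta(u_{n},u_{n+1},\rho)\qquad\text{for every }\rho>0.$$

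The first ingredient is the $\zeta$-contraction hypothesis applied to the pair $(u_{n},u_{n+1})$, which gives
$$\Upsilon(Su_{n},Su_{n+1},\zeta(\rho))\;\geq\;\Theta(u_{n},u_{n+1},\rho)\qquad(\rho>0).$$
Unfolding the definition of $\Upsilon$ in (\ref{M10}) and using the membership $u_{n+1}\in Su_{n}$, the left-hand side is bounded above by $\Theta(u_{n+1},Su_{n+1},\zeta(\rho))=\sup_{v\in Su_{n+1}}\Theta(u_{n+1},v,\zeta(\rho))$, so
$$\Theta(u_{n+1},Su_{n+1},\zeta(\rho))\;\geq\;\Theta(u_{n},u_{n+1},\rho)\qquad(\rho>0).$$
Lemma \ref{M8} then ensures that for each fixed $\rho$ this supremum is actually attained at some point of the compact set $Su_{n+1}$, so for each $\rho$ there is at least one candidate $v_{\rho}\in Su_{n+1}$ satisfying the desired inequality at that particular $\rho$.

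The main obstacle is promoting this per-$\rho$ selection to a single $u_{n+2}\in Su_{n+1}$ that works simultaneously for every $\rho>0$. My plan is a compactness argument: for each $\rho>0$ set
$$A_{\rho}\;=\;\bigl\{v\in Su_{n+1}\,:\,\Theta(u_{n+1},v,\zeta(\rho))\geq\Theta(u_{n},u_{n+1},\rho)\bigr\}.$$
The previous paragraph shows every $A_{\rho}$ is nonempty, and the continuity of $\Theta$ on $\Xi\times\Xi\times(0,\infty)$ makes each $A_{\rho}$ closed in the compact set $Su_{n+1}$. If one can verify that the family $\{A_{\rho}\}_{\rho>0}$ has the finite intersection property, compactness of $Su_{n+1}$ forces $\bigcap_{\rho>0}A_{\rho}\neq\emptyset$, and any element of this intersection can be designated $u_{n+2}$. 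To secure the finite intersection property, the natural tools are the monotonicity of $\Theta(u,v,\cdot)$ in its third argument (a direct consequence of the triangle axiom together with $\Theta(v,v,s)=1$) and the assumption $\lim_{\rho\to\infty}\Theta(u,v,\rho)=1$, which together control the behavior of the sets $A_{\rho}$ at small and large $\rho$ and reduce any finite subcollection to a single, simultaneously satisfiable inequality. Once $u_{n+2}$ is extracted from $\bigcap_{\rho>0}A_{\rho}$, the induction continues unchanged and delivers the required sequence.
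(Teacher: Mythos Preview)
Your overall strategy---an inductive construction using the $\zeta$-contraction hypothesis together with Lemma~\ref{M8} to pass from $u_{n+1}\in Su_{n}$ to a suitable $u_{n+2}\in Su_{n+1}$---is exactly what the paper does. The paper's proof is brief: it fixes a single $\rho>0$ at the outset, invokes Lemma~\ref{M8} once to obtain $u_{2}\in Su_{1}$ with $\Theta(u_{1},u_{2},\zeta(\rho))=\Theta(u_{1},Su_{1},\zeta(\rho))\ge\Upsilon(Su_{0},Su_{1},\zeta(\rho))\ge\Theta(u_{0},u_{1},\rho)$, and then proceeds by induction. It does not single out the uniformity-in-$\rho$ issue you identify as ``the main obstacle.''

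On that issue: your finite intersection property argument is not substantiated, and this is a genuine gap. You assert that monotonicity of $\Theta(u,v,\cdot)$ together with $\lim_{\rho\to\infty}\Theta(u,v,\rho)=1$ ``reduce any finite subcollection to a single, simultaneously satisfiable inequality,'' but no mechanism is given. Nothing in the definition of $\Pi$ forces $\zeta$ to be monotone, so for $\rho_{1}<\rho_{2}$ one may have $\zeta(\rho_{1})>\zeta(\rho_{2})$ while of course $\Theta(u_{n},u_{n+1},\rho_{1})\le\Theta(u_{n},u_{n+1},\rho_{2})$; the sets $A_{\rho}=\{v\in Su_{n+1}:\Theta(u_{n+1},v,\zeta(\rho))\ge\Theta(u_{n},u_{n+1},\rho)\}$ are then level sets with incomparable parameter pairs $(\zeta(\rho),\Theta(u_{n},u_{n+1},\rho))$, and in a general fuzzy metric space such level sets need not be nested or even pairwise intersecting. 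The tail condition $\lim_{\rho\to\infty}\Theta=1$ says nothing about finitely many fixed $\rho_{i}$. Without an explicit verification that $\bigcap_{i=1}^{k}A_{\rho_{i}}\neq\emptyset$ for arbitrary finite families, the compactness step does not close. In short, you have been more scrupulous than the paper in flagging the difficulty, but the sketch you offer does not yet resolve it.
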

\begin{proof}
  Let $\rho>0$, $u_{0}\in \Xi$ and $u_{1}\in Su_{0}$, by lemma\ref{M8} there is $u_{2}\in Su_{1}$ such that
  \begin{align*}
    \Theta(u_{1},u_{2},\zeta(\rho)) &=  \Theta(u_{1},Su_{1},\zeta(\rho))\\
                           &\geq \mathcal{H}(Su_{0},Su_{1},\zeta(\rho)).
  \end{align*}
By induction we construct a sequence $\{u_{n}\}\subset \Xi$ such that
\begin{center}
$u_{n+1}\in Su_{n}$ and  $\Theta(u_{n},u_{n+1},\zeta(\rho))\geq \mathcal{H}(Su_{n-1},Su_{n},\zeta(\rho))$.
\end{center}
Since $S$ is $\zeta$-contractive, then $\Theta(u_{n},u_{n+1},\zeta(\rho))\geq \Theta(u_{n-1},u_{n},\rho)$, which is complete the proof.
\end{proof}
\begin{lemma}\label{M14}
  Let $\{u_{n}\}$ be a sequence in a fuzzy metric space $(\Xi,\Theta,\Gamma)$ under a $\mathfrak{t}$-norm of $\mathfrak{H}$-type where for every $u,v\in \Xi$ and $\rho>0$, $\lim_{\rho\rightarrow\infty}\Theta(u,v,\rho)=1$. If there is $\zeta\in\Gamma$ such that for all $n\in\n$ and $\rho>0$,
  \begin{equation}\label{M5}
  \Theta(u_{n+2},u_{n+1},\zeta(\rho))\geq \Theta(u_{n+1},u_{n},\rho),
  \end{equation}
  then $\{u_{n}\}\subset \Xi$ is a G-Cauchy sequence.
\end{lemma}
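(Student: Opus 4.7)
The plan is to control the one-step distances $\Theta(u_{n+1},u_n,\cdot)$ uniformly first, and then to splice $p$ of them together via the triangle inequality using that the $\mathfrak{t}$-norm is of $\mathfrak{H}$-type.

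First, I would iterate the hypothesis (\ref{M5}). Substituting $s=\zeta^{n}(\rho)$ and inducting on $n$ yields
\[
\Theta(u_{n+1},u_{n},\zeta^{n}(\rho))\;\geq\;\Theta(u_{1},u_{0},\rho)\qquad\text{for every }n\in\mathbb{N},\ \rho>0.
\]
Before going further I would record the monotonicity of $\Theta$ in its last coordinate (which follows from the fuzzy triangle inequality applied with $z=v$, since $\Theta(v,v,\cdot)=1$ and $\Gamma(\cdot,1)=\mathrm{id}$), so $\Theta(u_{n+1},u_n,s)\geq\Theta(u_{n+1},u_n,\zeta^{n}(\rho))$ whenever $s\geq\zeta^{n}(\rho)$.

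Now fix $\epsilon\in(0,1)$, $\rho>0$, and a positive integer $p$. Using $\mathfrak{H}$-type, choose $\lambda\in(0,1)$ such that $\sigma>1-\lambda$ implies $\Gamma^{k}(\sigma)>1-\epsilon$ for every $k\geq 1$. Using $\lim_{\rho\to\infty}\Theta(u_{0},u_{1},\rho)=1$, pick $\rho^{*}>0$ with $\Theta(u_{0},u_{1},\rho^{*})>1-\lambda$. Because $\zeta\in\Pi$, we have $\zeta^{n}(\rho^{*})\to 0$, so there exists $N$ with $\zeta^{n}(\rho^{*})<\rho/p$ for all $n\geq N$. Combining these facts with the inequality from the previous paragraph gives
\[
\Theta(u_{n+1},u_{n},\rho/p)\;\geq\;\Theta(u_{n+1},u_{n},\zeta^{n}(\rho^{*}))\;\geq\;\Theta(u_{1},u_{0},\rho^{*})\;>\;1-\lambda,
\]
for every $n\geq N$; this is the uniform one-step smallness I need.

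Finally, iterating the fuzzy triangle inequality $p-1$ times and using monotonicity of $\Gamma$ in each slot,
\[
\Theta(u_{n},u_{n+p},\rho)\;\geq\;\Gamma^{p-1}\!\bigl(\min_{0\leq i\leq p-1}\Theta(u_{n+i},u_{n+i+1},\rho/p)\bigr).
\]
For $n\geq N$ the minimum on the right is strictly greater than $1-\lambda$, so by the $\mathfrak{H}$-type choice of $\lambda$ the right-hand side exceeds $1-\epsilon$. This is exactly the G-Cauchy condition for $\{u_{n}\}$, finishing the proof. The only delicate point is the second paragraph, where one has to combine the geometric-type iteration of $\zeta$, the asymptotic behaviour $\lim_{\rho\to\infty}\Theta=1$, and the monotonicity of $\Theta$ to pass from a bound at the shrinking scales $\zeta^{n}(\rho^{*})$ to a bound at the fixed scale $\rho/p$; once that is in hand the $\mathfrak{H}$-type step is routine.
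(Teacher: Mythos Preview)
Your proof is correct and follows essentially the same strategy as the paper's: iterate (\ref{M5}) to obtain $\Theta(u_{n+1},u_n,\zeta^{n}(\rho))\geq\Theta(u_1,u_0,\rho)$, use $\lim_{\rho\to\infty}\Theta=1$ together with $\zeta^{n}\to 0$ to force the one-step terms close to $1$, and then splice $p$ consecutive steps via the fuzzy triangle inequality and the $\mathfrak{H}$-type property of $\Gamma$. The only cosmetic difference is that you split $\rho$ into $p$ equal pieces $\rho/p$ and invoke the monotonicity of $\Theta(u,v,\cdot)$ explicitly, whereas the paper peels off increments of size $\zeta^{n}(\rho_{0})$; your presentation is, if anything, slightly cleaner on this point.
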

\begin{proof}
Let $\{u_{n}\}$ be a sequence satisfied the condition (\ref{M5}). It is obviously that for all $n\in\n$ and $\rho>0$
\begin{equation}\label{M1}
  \Theta(u_{n},u_{n+1},\zeta^{n}(\rho))\geq \Theta(u_{0},u_{1},\rho)
\end{equation}
Let $\delta\in(0,1)$ and $\lambda>0$.\\
The fact that $\Gamma$ is of $\mathfrak{t}$-type implies there exists $\epsilon\in(0,1)$ such that
\begin{equation}\label{M2}
  \rho>1-\epsilon \Longrightarrow \Gamma^{n}(\rho)>1-\delta\mbox{ for all } n\geq1.
\end{equation}
Since $\lim_{\rho\rightarrow\infty}\Theta(u_{0},u_{1},\rho)=1$, there exists $\rho_{0}>0$ such that
\begin{equation}\label{M3}
\Theta(u_{0},u_{1},\rho_{0})>1-\epsilon.
\end{equation}
Let $p$ be a positive integer.\\
As $\lim_{n\rightarrow\infty}\zeta^{n}(\rho_{0})=0$, there exists $N\in\n$ such that
\begin{equation}\label{M4}
\zeta^{n}(\rho_{0})<\displaystyle\frac{\lambda}{p} \mbox{ whenever } n\geq N.
\end{equation}
So, for $n\geq N$,
\begin{align*}
 \Theta(u_{n},u_{n+p},\lambda)&\geq \Gamma(\Theta(u_{n},u_{n+1},\zeta^{n}(\rho_{0})),\Theta(u_{n+1},u_{n+p},\lambda-\zeta^{n}(\rho_{0}))) \\
                        &\geq \Gamma(\Theta(u_{n},u_{n+1},\zeta^{n}(\rho_{0})),\Gamma(\Theta(u_{n+1},u_{n+2},\zeta^{n}(\rho_{0})), \\
                        &\hspace{5cm} \Theta(u_{n+2},u_{n+p},\lambda-2\zeta^{n}(\rho_{0})))).
\end{align*}
We concluded that
\begin{equation}\label{M29}
\begin{split} \Theta(u_{n},u_{n+p},\lambda) \geq \Gamma(\Theta(u_{n},u_{n+1},\zeta^{n}(\rho_{0})),\Gamma(\Theta(u_{n+1},u_{n+2},\zeta^{n}(\rho_{0})),\Gamma(\ldots,\\
   \Gamma(\Theta(u_{n+p-2},u_{n+p-1},\zeta^{n}(\rho_{0}),\Theta(u_{n+p-1},u_{n+p},\lambda-p\zeta^{n}(\rho_{0}))))\ldots).
\end{split}
\end{equation}
By (\ref{M29}), (\ref{M3}), (\ref{M1}) and (\ref{M2}) it follows that
\begin{equation}\label{M6}
\Theta(u_{n},u_{n+p},\lambda)\geq \Gamma^{p}(1-\epsilon).
\end{equation}
Using (\ref{M5}), we concluded that
\begin{equation}\label{M7}
 \Theta(u_{n},u_{n+p},\lambda) > 1-\delta.
\end{equation}
So $\{u_{n}\}$ is a G-Cauchy sequence.
\end{proof}
\begin{theorem}\label{M28}
Think of $(\Xi,\Theta,\Gamma)$ as a G-Complete fuzzy metric space under a $\mathfrak{t}$-norm of $\mathfrak{H}$-type such that for all $u,v\in \Xi$, $\lim_{\rho\rightarrow\infty}\Theta(u,v,\rho)=1$. If $S:\Xi\longrightarrow K(\Xi)$ is a $\zeta$-contraction multivalued mapping, then $S$ possesses a fixed point.
\end{theorem}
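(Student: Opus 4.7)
The plan is to exhibit a fixed point as the limit of a Picard-type iteration $\{u_n\}$ with $u_{n+1}\in Su_n$, then show $u^{\ast}=\lim u_n$ lies in $Su^{\ast}$.

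First I would invoke Lemma~\ref{M19} directly to produce a sequence $\{u_n\}\subset\Xi$ with $u_{n+1}\in Su_n$ satisfying $\Theta(u_{n+1},u_{n+2},\zeta(\rho))\ge\Theta(u_n,u_{n+1},\rho)$ for every $\rho>0$. This inequality is exactly the hypothesis (\ref{M5}) of Lemma~\ref{M14}, and since $\Gamma$ is of $\mathfrak{H}$-type and $\lim_{\rho\to\infty}\Theta(u,v,\rho)=1$, Lemma~\ref{M14} immediately gives that $\{u_n\}$ is G-Cauchy. By G-completeness of $(\Xi,\Theta,\Gamma)$, there exists $u^{\ast}\in\Xi$ with $u_n\to u^{\ast}$.

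Next I would show $u^{\ast}\in Su^{\ast}$. Applying the $\zeta$-contraction property to $u_n$ and $u^{\ast}$, then using $u_{n+1}\in Su_n$ together with the definition of $\Upsilon$ in (\ref{M10}), I get
$$\Theta(u_{n+1},Su^{\ast},\zeta(\rho))\;\ge\;\inf_{u\in Su_n}\Theta(u,Su^{\ast},\zeta(\rho))\;\ge\;\Upsilon(Su_n,Su^{\ast},\zeta(\rho))\;\ge\;\Theta(u_n,u^{\ast},\rho).$$
Since $u_n\to u^{\ast}$, the right-hand side tends to $1$, so $\Theta(u_{n+1},Su^{\ast},\zeta(\rho))\to 1$.

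The main delicate point is concluding $u^{\ast}\in Su^{\ast}$ from this. Here I would exploit that $Su^{\ast}\in K(\Xi)$ is compact: by Lemma~\ref{M8}, for each $n$ there is $v_n\in Su^{\ast}$ with $\Theta(u_{n+1},Su^{\ast},\zeta(\rho))=\Theta(u_{n+1},v_n,\zeta(\rho))$. Compactness of $Su^{\ast}$ lets me extract a subsequence $v_{n_k}\to v^{\ast}\in Su^{\ast}$, and joint continuity of $\Theta$ then yields $\Theta(u_{n_k+1},v_{n_k},\zeta(\rho))\to\Theta(u^{\ast},v^{\ast},\zeta(\rho))=1$, hence $u^{\ast}=v^{\ast}\in Su^{\ast}$. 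The obstacle to watch out for is precisely this interchange of the supremum defining $\Theta(\,\cdot\,,Su^{\ast},\zeta(\rho))$ with the limit in $n$; this is where compactness of $Su^{\ast}$ together with Lemma~\ref{M8} and continuity of $\Theta$ (from the lemma after Definition~1.7) does all the work, and nothing else in the hypotheses seems able to substitute for it.
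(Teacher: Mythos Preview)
Your proof is correct and follows essentially the same route as the paper: invoke Lemma~\ref{M19} and Lemma~\ref{M14} to obtain a G-Cauchy iteration converging to some $u^{\ast}$, then use the $\zeta$-contraction together with Lemma~\ref{M8} to show $u^{\ast}\in Su^{\ast}$. Your explicit subsequence extraction via compactness of $Su^{\ast}$ is in fact more careful than the paper's version, which passes to the limit in $\Theta(u_{n+1},z,\rho)$ with a $z$ that tacitly depends on $n$.
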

\begin{proof}
Since the multivalued mapping $S$ is $\zeta$-contraction, by lemma \ref{M19} there is a sequence $\{u_{n}\}\subset \Xi$ such that
\begin{center}
$u_{n+1}\in Su_{n}$ and $\Theta(u_{n+1},u_{n+2},\zeta(\rho))\geq \Theta(u_{n},u_{n+1},\rho)$ for all $\rho>0$.
\end{center}
By lemma\ref{M14} we concluded that $\{u_{n}\}$ is a G-Cauchy sequence. So it's converge to some $u\in \Xi$.\\
Now we shaw that $u\in Su$.\\
Let $\rho>0$, as $\lim_{n\rightarrow\infty}\zeta^{n}(\rho)=0$, we can choice a positive integer $N$ where
\begin{center}
  $\forall n\geq N$, $\zeta^{n}(\rho)<\rho$.
\end{center}
Let $n\geq N$, so $\Theta(u_{n+1},Su,\rho)\geq \Theta(Su_{n},Su,\zeta^{N+1}(\rho))$.\\
Since $S$ is $\zeta$-contraction, then $\Theta(u_{n+1},Su,\rho)\geq \Theta(u_{n},u,\zeta^{N}(\rho))$.\\
By lemma\ref{M8} there is $z\in Su$ satisfying $\Theta(u_{n+1},Su,\rho)=\Theta(u_{n+1},z,\rho)$.\\
So
\begin{equation}\label{M23}
   \Theta(u_{n+1},z,\rho)\geq \Theta(u_{n},u,\zeta^{N}(\rho)).
 \end{equation}
 Letting $n\longrightarrow\infty$ in (\ref{M23}), we have $\Theta(u,z,\rho)=1$, that mis $u=z$, which implies that $u\in Sx$.
\end{proof}
\begin{theorem}\label{M30}
Think of $(\Xi,\Theta,\Gamma)$ as a G-Complete fuzzy metric space under a $\mathfrak{t}$-norm of $\mathfrak{H}$-type such that for all $u,v\in \Xi$, $\lim_{\rho\rightarrow\infty}\Theta(u,v,\rho)=1$. If $f:\Xi\longrightarrow \Xi$ is a $\zeta$-contraction mapping, then $f$ possesses a unique fixed point $z$, moreover, the sequence $\{f^{n}u\}$ converges to $z$ for each $u\in \Xi$.
\end{theorem}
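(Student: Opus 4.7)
The plan is to adapt the proof of Theorem \ref{M28} to the single-valued case, where the argument simplifies considerably because we no longer need to select preimages from compact sets.

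For existence, fix an arbitrary $u_{0}\in \Xi$ and define the Picard sequence $u_{n}=f^{n}u_{0}$. The $\zeta$-contraction condition applied directly gives
$$\Theta(u_{n+1},u_{n+2},\zeta(\rho))=\Theta(fu_{n},fu_{n+1},\zeta(\rho))\geq \Theta(u_{n},u_{n+1},\rho),$$
so the sequence $\{u_{n}\}$ satisfies the hypothesis (\ref{M5}) of Lemma \ref{M14}. That lemma then yields that $\{u_{n}\}$ is a G-Cauchy sequence, and G-completeness of $(\Xi,\Theta,\Gamma)$ produces a limit $z\in\Xi$. To verify $fz=z$, I would apply the contraction once more to get $\Theta(u_{n+1},fz,\zeta(\rho))\geq \Theta(u_{n},z,\rho)$, pass to the limit $n\to\infty$ using continuity of $\Theta$ (the lemma just after Example \ref{M31}), and conclude $\Theta(z,fz,\zeta(\rho))\geq \Theta(z,z,\rho)=1$, hence $fz=z$. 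This simultaneously shows the Picard iterates from every starting point converge to a fixed point.

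For uniqueness, suppose $z$ and $w$ are two fixed points. The contraction condition gives $\Theta(z,w,\zeta(\rho))=\Theta(fz,fw,\zeta(\rho))\geq \Theta(z,w,\rho)$, and iterating this yields $\Theta(z,w,\zeta^{n}(\rho))\geq \Theta(z,w,\rho)$ for every $n\geq 1$ and $\rho>0$. Here is where I use the hypothesis $\lim_{\rho\to\infty}\Theta(z,w,\rho)=1$: given $\epsilon\in(0,1)$, pick $\rho_{\epsilon}>0$ with $\Theta(z,w,\rho_{\epsilon})>1-\epsilon$, so that $\Theta(z,w,\zeta^{n}(\rho_{\epsilon}))>1-\epsilon$ for all $n$. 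Since $\zeta^{n}(\rho_{\epsilon})\to 0$, for any fixed $\rho>0$ we have $\zeta^{n}(\rho_{\epsilon})<\rho$ eventually, and the standard monotonicity of $\Theta(z,w,\cdot)$ (which follows from the triangle axiom together with $\Theta(w,w,s)=1$) then forces $\Theta(z,w,\rho)\geq \Theta(z,w,\zeta^{n}(\rho_{\epsilon}))>1-\epsilon$. Letting $\epsilon\to 0$ gives $\Theta(z,w,\rho)=1$, so $z=w$ by the second axiom of a fuzzy metric.

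The main obstacle is the uniqueness step, which, unlike in classical metric space Banach contraction arguments, cannot be closed by simply iterating the contraction inequality because $\Theta(z,w,\zeta^{n}(\rho))$ is evaluated at shrinking time parameters; one has to combine the monotonicity of $\Theta$ in the third variable with the limit assumption $\lim_{\rho\to\infty}\Theta(z,w,\rho)=1$ in order to turn a lower bound at small times into a lower bound at arbitrary times. The existence part, in contrast, is essentially a streamlined version of the proof of Theorem \ref{M28} and requires no new technical ingredients beyond Lemma \ref{M14}.
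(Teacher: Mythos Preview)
Your proof is correct and follows essentially the same approach as the paper: existence comes from the Picard sequence being G-Cauchy via Lemma~\ref{M14} (the paper simply quotes Theorem~\ref{M28} instead of re-deriving this inline), and uniqueness rests on the iterated inequality $\Theta(z,w,\zeta^{n}(\rho))\geq \Theta(z,w,\rho)$ together with monotonicity of $\Theta$ in its third variable and the hypothesis $\lim_{\rho\to\infty}\Theta(z,w,\rho)=1$. The only cosmetic difference is that the paper phrases the uniqueness step as a contradiction argument, whereas you argue directly with an $\epsilon$--$\rho_{\epsilon}$ choice; the underlying mechanism is identical.
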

\begin{proof}
By Theorem \ref{M28} there exists a $z\in \Xi$ such that $fz=z$.\\
Now we demonstrate that $z$ is unique. For that we suppose that there is another $v\in \Xi$ where $fv=v$.\\
Let $\rho>0$. So $\Theta(fz,fv,\zeta(\rho))=\Theta(z,v,\zeta(\rho))\geq \Theta(z,v,\rho)$.\\
We check by induction that for all $n\in\n$, $\Theta(z,v,\zeta^{n}(\rho))\geq \Theta(z,v,\rho)$.\\
Since $\zeta^{n}(\rho)\longrightarrow 0$ as $n\longrightarrow\infty$, there is a positive integer $N$ such that $\zeta^{n}(\rho)<\rho$ for $n\geq N$.\\
So  $\Theta(z,v,\zeta^{n}(\rho))=\Theta(z,v,\rho)$.\\
We show that $\Theta(z,v,\rho)=1$ for all $\rho>0$.\\
Assume that there exists $\rho_{0}>0$ satisfied $\Theta(z,v,\rho)<1$, then there is $\rho_{1}>\rho_{0}$ such that $\Theta(z,v,\rho_{1})>\Theta(z,v,\rho_{0})$.\\
The fact that $\zeta^{n}(\rho)\longrightarrow 0$ as $n\longrightarrow\infty$ implied that there is a positive integer $n_{{0}}$ such that $\zeta^{n_{0}}(\rho_{1})<\rho_{0}$.\\
So $\Theta(z,v,\zeta^{n_{0}}(\rho_{1}))=\Theta(z,v,\rho_{1})\leq \Theta(z,v,\rho_{0})$. Which is a contradiction. Then we concluded that $z=v$.
\end{proof}
Noted that in the Proof of Theorem\ref{M28} the condition ”$\Theta_{pq}(0)=0$” was not used, then we have the Grabiec fixed point.
\begin{corollary}\cite{emm4}
Think of $(\Xi,\Theta,\Gamma)$ as a G-Complete fuzzy metric space under a $\mathfrak{t}$-norm of $\mathfrak{H}$-type such that for all $u,v\in \Xi$, $\lim_{\rho\rightarrow\infty}\Theta(u,v,\rho)=1$. If $f:\Xi\longrightarrow \Xi$ is a contraction mapping, then $f$ possesses a unique fixed point.
\end{corollary}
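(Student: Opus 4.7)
The plan is to derive this corollary as a direct specialization of Theorem \ref{M30}. A contraction mapping in the Grabiec sense means that there exists $k\in(0,1)$ such that $\Theta(fu,fv,k\rho)\geq \Theta(u,v,\rho)$ for all $u,v\in\Xi$ and $\rho>0$. I would therefore define $\zeta:(0,\infty)\longrightarrow(0,\infty)$ by $\zeta(\rho)=k\rho$ and show that $f$ is a $\zeta$-contraction in the sense of the preceding definition.

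The first step is to verify that $\zeta\in\Pi$. A straightforward induction gives $\zeta^{n}(\rho)=k^{n}\rho$, and since $k\in(0,1)$ we have $\lim_{n\rightarrow\infty}\zeta^{n}(\rho)=0$ for every $\rho>0$. Thus $\zeta$ belongs to the class $\Pi$. The second step is to note that the contraction inequality is exactly the $\zeta$-contraction condition for this particular $\zeta$, so $f$ satisfies all the hypotheses of Theorem \ref{M30}.

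Finally, I would invoke Theorem \ref{M30} directly: since $(\Xi,\Theta,\Gamma)$ is a G-complete fuzzy metric space under a $\mathfrak{t}$-norm of $\mathfrak{H}$-type with $\lim_{\rho\rightarrow\infty}\Theta(u,v,\rho)=1$, and $f$ is a $\zeta$-contraction with $\zeta(\rho)=k\rho$, the theorem yields a unique fixed point $z\in\Xi$ of $f$, and moreover the Picard iterates $\{f^{n}u\}$ converge to $z$ for every starting point $u\in\Xi$.

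There is essentially no obstacle here; the only thing to watch is that the definition of $\zeta$-contraction given in the paper matches the ordinary contraction condition once $\zeta(\rho)=k\rho$, and that nothing in Theorem \ref{M28} or Theorem \ref{M30} used any additional property of $\zeta$ beyond membership in $\Pi$. Since this is the case, the corollary is an immediate consequence and no separate argument is needed.
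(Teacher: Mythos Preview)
Your proposal is correct and matches the paper's approach exactly: the paper gives no separate proof for this corollary, treating it as an immediate specialization of Theorem~\ref{M30} with $\zeta(\rho)=k\rho$ (this is precisely how the paper defines a ``contraction'' right after the definition of $\zeta$-contraction). Your verification that $\zeta^{n}(\rho)=k^{n}\rho\to 0$ so that $\zeta\in\Pi$, followed by a direct appeal to Theorem~\ref{M30}, is all that is needed.
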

\section{Relative results in ordinary metric spaces}

\begin{theorem}
Let $(\Xi,d)$ be a metric space and $f:\Xi\longrightarrow \Xi$ be a mapping. We suppose that the following statement are fulfilled:
\begin{description}
  \item[a)] If $\{\iota_{n}\}$ is a sequence in $\Xi$ such that, for all $p\in\n$,\ $d(\iota_{n},\iota_{n+p})\longrightarrow 0$ as $n\longrightarrow\infty$, then $\{\iota_{n}\}$ is convergente;
  \item[b)] There exists $\zeta\in\Gamma$ satisfied
  \begin{center}
  $\rho d(f\iota,f\kappa)\leq\zeta(\rho)d(\iota,\kappa)$ for all $\iota,\kappa\in \Xi$ and $\rho>0$.
 \end{center}
\end{description}
Then $f$ possesses a unique fixed point, moreover, the sequence $\{f^{n}\iota\}$ converges to $z$ for each $\iota\in \Xi$.
\end{theorem}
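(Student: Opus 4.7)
The plan is to reduce the theorem to Theorem \ref{M30} by building an appropriate fuzzy metric from the ordinary metric $d$. Explicitly, I would define
$$\Theta(u,v,\rho)=\frac{\rho}{\rho+d(u,v)},\qquad u,v\in\Xi,\ \rho>0,$$
and work with the t-norm $\Gamma_M(\mu,\nu)=\min(\mu,\nu)$, which is continuous and of $\mathfrak{H}$-type. A small check is that $(\Xi,\Theta,\Gamma_M)$ is a fuzzy metric space: the only nontrivial axiom is the triangle inequality. Assuming (WLOG) $\Theta(u,v,\rho)\le \Theta(v,z,s)$, the inequality $d(v,z)/s\le d(u,v)/\rho$ together with $d(u,z)\le d(u,v)+d(v,z)$ yields $d(u,z)\le d(u,v)(\rho+s)/\rho$, hence $\Theta(u,z,\rho+s)\ge \Theta(u,v,\rho)=\min(\Theta(u,v,\rho),\Theta(v,z,s))$. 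Also $\lim_{\rho\to\infty}\Theta(u,v,\rho)=1$ trivially.

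Next I would translate hypothesis (a) into G-completeness of $(\Xi,\Theta,\Gamma_M)$. If $\{\iota_n\}$ is G-Cauchy in the fuzzy sense, then for every $p\in\mathbb{N}$ and $\rho>0$ we have $\rho/(\rho+d(\iota_n,\iota_{n+p}))\to 1$, so $d(\iota_n,\iota_{n+p})\to 0$ as $n\to\infty$. By (a), $\{\iota_n\}$ converges in $d$, and convergence in $d$ immediately gives convergence in $\Theta$.

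Then I would verify that $f$ is a $\zeta$-contraction in the fuzzy sense. The inequality $\Theta(f\iota,f\kappa,\zeta(\rho))\ge \Theta(\iota,\kappa,\rho)$ is equivalent, after cross-multiplying, to $\zeta(\rho)d(\iota,\kappa)\ge \rho\,d(f\iota,f\kappa)$, which is exactly hypothesis (b). With every hypothesis of Theorem \ref{M30} in place, its conclusion provides a unique $z\in\Xi$ with $fz=z$ and with $f^n\iota\to z$ in $\Theta$; since $\Theta$ is the metric-induced fuzzy metric, this convergence coincides with convergence in $d$.

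The only genuinely delicate point is the choice of t-norm: Example \ref{M31} in the paper pairs the standard fuzzy metric with $\Gamma_p$, but $\Gamma_p$ is not of $\mathfrak{H}$-type, so Theorem \ref{M30} would not apply. The work-around is to replace $\Gamma_p$ by $\Gamma_M$ and verify (as above) that the triangle inequality still holds; everything else is a straightforward dictionary translation between the metric and fuzzy-metric formulations.
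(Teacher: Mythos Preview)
Your proof is correct and follows the same overall strategy as the paper: define the standard fuzzy metric $\Theta(u,v,\rho)=\rho/(\rho+d(u,v))$, read hypothesis (a) as $G$-completeness, read hypothesis (b) as the $\zeta$-contraction condition, and invoke Theorem~\ref{M30}.

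The one substantive difference is the choice of $\mathfrak{t}$-norm. The paper simply cites Example~\ref{M31} and works with $\Gamma_p$, whereas you deliberately switch to $\Gamma_M$ and verify the fuzzy triangle inequality for $\Gamma_M$ by hand. Your observation is well taken: $\Gamma_p$ is \emph{not} of $\mathfrak{H}$-type (for any $\rho<1$ one has $\Gamma_p^{\,n}(\rho)=\rho^{\,n}\to 0$), so strictly speaking the hypotheses of Theorem~\ref{M30} are not met with $\Gamma_p$. Your replacement by $\Gamma_M$, together with the short direct check that $\Theta(u,z,\rho+s)\ge\min\{\Theta(u,v,\rho),\Theta(v,z,s)\}$, closes this gap cleanly. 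Thus your argument is not merely an alternative route but in fact a minor repair of the paper's own proof; everything else---the translation of (a) into $G$-completeness and the equivalence of (b) with the fuzzy $\zeta$-contraction inequality via cross-multiplication---matches the paper line for line.
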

\begin{proof}
For $\iota,\kappa\in \Xi$ and $\rho>0$, we put $\Theta(\iota,\kappa,\rho)=\displaystyle\frac{\rho}{\rho+d(\iota,\kappa)}$, so by example \ref{M31}, $(\Xi,\Theta,\Gamma_{p})$ is a fuzzy metric space satisfying $\lim_{\rho\rightarrow\infty}\Theta(\iota,\kappa,\rho)=1$ for all $\iota,\kappa\in \Xi$. Clearly the statement (a) implies that $(\Xi,\Theta,\Gamma_{p})$ is G-complete. To complete the proof, we show that the statement (b) ensures that $f$ is $\zeta$-contraction in $(\Xi,\Theta,\Gamma_{p})$.\\
In fact, let $\iota,\kappa\in \Xi$ and $\rho>0$, then
\begin{align*}
       d(f\iota,f\kappa)\leq\displaystyle\frac{\zeta(\rho)}{\rho}d(\iota,\kappa)& \Rightarrow & \rho\zeta(\rho)+td(f\iota,f\kappa)\leq\displaystyle \rho\zeta(\rho)+\zeta(\rho)d(\iota,\kappa) \\
        &\Rightarrow & \displaystyle\frac{1}{\rho\zeta(\rho)+td(f\iota,f\kappa)}\geq\displaystyle\frac{1}{\rho\zeta(\rho)+\zeta(\rho)d(\iota,\kappa)}\\
        &\Rightarrow & \displaystyle\frac{\zeta(\rho)}{\zeta(\rho)+d(f\iota,f\kappa)}\geq\displaystyle\frac{\rho}{\rho+d(\iota,\kappa)}\\
        &\Rightarrow & \Theta(f\iota,f\kappa,\zeta(\rho))\geq \Theta(\iota,\kappa,\rho).
     \end{align*}
So $f$ is $\zeta$-contraction in $(\Xi,\Theta,\Gamma_{p})$. By theorem \ref{M30}, $f$ possesses a unique fixed point $z$ and the sequence $\{f^{n}\iota\}$ converges to $z$ for all $\iota\in \Xi$.`
\end{proof}

	
\end{document}